\newtheorem{thm}{Theorem}[section]
\newtheorem{prop}{Proposition}[section]
\newtheorem{lma}{Lemma}[section]
\def\N{{\rm I\kern-0.16em N}}
\def\R{{\rm I\kern-0.16em R}}
\def\E{{\rm I\kern-0.16em E}}
\def\P{{\rm I\kern-0.16em P}}
\def\F{{\rm I\kern-0.16em F}}
\def\B{{\rm I\kern-0.16em B}}
\def\C{{\rm I\kern-0.46em C}}
\def\G{{\rm I\kern-0.50em G}}
\newcommand{\ud}{\mathrm{d}}
\newcommand{\Q}{\mathbb{Q}}
\newcommand{\ov}[1]{\overline{#1}}
\numberwithin{equation}{section}
\def\ind{\mathrel{\hbox{\rlap{%
\hbox to 7.5pt{\hrulefill}}\raise6.6pt\hbox{\eka\char'167}}}}
\begin{document}
\title[Pricing European options on multiple assets]{Note on multidimensional Breeden-Litzenberger representation for state price densities}

\author[Talponen and Viitasaari]{Jarno Talponen \and Lauri Viitasaari}
\address{Department of Physics and Mathematics, University of Eastern Finland, P.O. Box 111, 80101 JOENSUU, talponen@iki.fi}
\address{Department of Mathematics and System Analysis, Helsinki University of Technology\\
P.O. Box 11100, FIN-00076 Aalto,  FINLAND} 

\begin{abstract}
In this note, we consider European options of type \\
$h(X^1_T, X^2_T,\ldots, X^n_T)$ depending on several underlying assets. We give a multidimensional version of the result of Breeden and Litzenberger \cite{Breeden} on the relation between derivatives of the call price and the risk-neutral density of the underlying asset. The pricing measure is assumed to be
absolutely continuous with respect to the Lebesgue measure on the state space.
 \end{abstract}

\subjclass[2010]{91G20, 45Q05. JEL Codes: G13, C02.\\ 
Keywords: \it option valuation, options of multidimensional assets,
state price densities, Rainbow options, basket options, Breeden-Litzenberger representation}

\maketitle

\section{Introduction}
Option valuation is one of the most central problems in financial mathematics. However, in many models of interest the option valuation
cannot be solved in closed form and thus different approaches have been developed. For instance one can use partial differential
equations (PDE) or partial integro-differential (PIDE) methods, Monte Carlo methods, or tree methods (see e.g. \cite{Jeanblanc}, \cite{Oksendal} and \cite{Fengler}). 
One approach to value complicated structured products is to determine their values in terms of the values of simple derivatives of the underlying such as call options and digital options. This is essentially static hedging.
In the work of Breeden and Litzenberger \cite{Breeden} the authors showed that if the second derivative of the call option price $V^C(K)$ with respect to the strike exists and is continuous, then the price of European option with payoff $f(X_T)$ is given by
\begin{equation}
\label{BL}
V^f = \int_{0}^\infty f(a)\frac{\ud^2}{\ud a^2}V^C(a)\ud a
\end{equation}
where we treated the short interest rate as $0$ for the sake of simplicity.
Thus the second derivative of the strike price of the call with respect to the strike price is the state price density of the underlying asset $X_T$. This result has significant applications especially to static hedging which is a field of active research. For more details and discussion, see for instance Carr \cite{Carr}, \cite{Carr2} and references therein.

In this article we derive multidimensional version of the Breeden-Litzenberger type representation of the state price density. In other words, if the distribution of the underlying asset $\ov{X}_T$ is absolutely continuous with respect to the Lebesgue measure, we show that then the density can be derived from the partial derivatives of the price of rainbow options with payoff
\begin{equation*}
h_p(\ov{x},\ov{K}, K) = \left( \left(\sum_{i=1}^{n}((x_i-K_i)^{+})^p\right)^{1/p}-K\right)^{+},
\end{equation*}
where $0<p< \infty$ and $\ov{X}_T$ is a vector of asset prices at maturity. 

The benefit of our results is that they cover a wide class of models. In particular, we only assume that at least one pricing measure for $\ov{X}_T$ exists. 
We do not assume that it is unique. Moreover, we consider general underlying assets $\ov{X}_T$. 
Hence our results are valid in models which may be complete or incomplete, or discrete or continuous in time.

\section{Results}

In our market model we will assume deterministic interest rate. Thus, without loss of generality, we may omit the interest in calculations. We denote by $m_n$ the Lebesgue measure on $\R^n$ and we denote it simply by $m$ if there is no danger of confusion. Denote
$\R_{+}^{n}=[0\infty)^n$.

In a general model the law of $\ov{X}_T$ under $\Q$ is not necessarily absolutely continuous with respect to Lebesgue measure $m$ on $\R_{+}^n$.  However, typically the state-price density is absolutely continuous with respect to the Lebesgue and then we have nice representations for it.
It was shown by Breeden and Litzenberger \cite{Breeden} that in $1$-dimensional case the risk-neutral density can be obtained by taking the second derivative of the strike price in the call's price functional. In this section we derive similar result for multidimensional case. 

Let $0<p<\infty$. We define a function $h_p:\R_+^{2n+1}\rightarrow\R_+$ by
$$
h_p(\ov{x},\ov{K}, K) = \left( \left(\sum_{i=1}^{n}((x_i-K_i)^{+})^p\right)^{1/p}-K\right)^{+}.
$$
We will denote by $V^p(\ov{K}, K)$ the corresponding price of European rainbow option with payoff $h_p(\ov{X}_T,\ov{K},K)$. We will also consider
function $h_{\infty}$ given as a limit
$$
h_{\infty} = \lim_{p\rightarrow\infty}h_p,
$$
and the corresponding price given by $V^{\infty}$. We begin with some results on relation between prices $V^p$ with different values of $p$. 
\begin{lma}
Let $(\ov{K},K)\in \R_+^{n+1}$ be arbitrary. For every $\ov{x}\in\R_+^n$ we have
\begin{equation}
h_{\infty}(\ov{x},\ov{K},K) = \left(\max_{i}(x_i-K_i)^{+} -K\right)^{+},
\end{equation}
and
\begin{equation}
V^{\infty}(\ov{K},K) = \lim_{p\rightarrow\infty} V^{p}(\ov{K},K).
\end{equation}
\end{lma}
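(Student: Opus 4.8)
The first claim is a pointwise statement about the $\ell^p$-norm degenerating to the $\ell^\infty$-norm. Fix $\ov{x}\in\R_+^n$ and set $y_i=(x_i-K_i)^{+}\ge 0$. I would prove that $\left(\sum_{i=1}^n y_i^p\right)^{1/p}\to\max_i y_i$ as $p\to\infty$. The standard two-sided bound does it: writing $M=\max_i y_i$, one has $M^p\le \sum_i y_i^p\le n M^p$, hence $M\le \left(\sum_i y_i^p\right)^{1/p}\le n^{1/p}M$, and $n^{1/p}\to 1$. Since $t\mapsto(t-K)^{+}$ is continuous, the outer operation passes to the limit, giving $h_\infty(\ov{x},\ov{K},K)=\left(\max_i(x_i-K_i)^{+}-K\right)^{+}$. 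This step is entirely routine.

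For the second claim, $V^{\infty}(\ov K,K)=\lim_{p\to\infty}V^p(\ov K,K)$, the point is to interchange the limit in $p$ with the pricing expectation $\E_\Q$. I would fix a pricing measure $\Q$ for $\ov X_T$ (one exists by hypothesis), so that $V^p(\ov K,K)=\E_\Q\!\left[h_p(\ov X_T,\ov K,K)\right]$ and similarly $V^\infty=\E_\Q\!\left[h_\infty(\ov X_T,\ov K,K)\right]$ — here I am implicitly taking $V^\infty$ to be defined as the price of the payoff $h_\infty$, consistently with the text. By the first claim, $h_p(\ov X_T,\ov K,K)\to h_\infty(\ov X_T,\ov K,K)$ pointwise ($\Q$-a.s.). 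To pass the limit inside, I would produce a $\Q$-integrable dominating function. From the bound above, $h_p(\ov x,\ov K,K)\le n^{1/p}\max_i(x_i-K_i)^{+}\le n\,\max_i(x_i-K_i)^{+}\le n\sum_{i=1}^n x_i$ for all $p\ge 1$; and $\E_\Q\!\left[\sum_i X_T^i\right]<\infty$ since each $X_T^i$ is (discounted) a traded asset under the pricing measure, so its price — in particular its $\Q$-expectation — is finite. Dominated convergence then yields the interchange.

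The only genuine subtlety is the integrability of the dominating function, i.e. that $\E_\Q\!\left[\sum_i X_T^i\right]<\infty$. This is exactly the standing assumption that the individual assets (and hence simple calls on them) have finite prices under the chosen pricing measure; equivalently $h_p\le n\sum_i x_i$ reduces the question to the finiteness of the prices of the underlyings themselves, which is part of the market-model setup. With that in hand, the argument is: (i) pointwise convergence of $h_p$ to $h_\infty$ from the first claim; (ii) the uniform-in-$p$ bound $h_p\le n\sum_i x_i$; (iii) dominated convergence under $\Q$; (iv) read off $\lim_p V^p=\E_\Q[h_\infty]=V^\infty$. No obstacle beyond bookkeeping the integrability hypothesis carried from the model setup.
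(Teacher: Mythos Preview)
Your argument is correct. The paper itself gives no proof of this lemma (it is marked with a bare \qed\ and treated as immediate), so there is nothing to compare against; your writeup supplies exactly the routine details one would expect.

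One small remark on the second claim: you invoke dominated convergence and therefore need $\E_\Q\bigl[\sum_i X_T^i\bigr]<\infty$, which you correctly flag as an implicit model assumption. You can sidestep this entirely by observing that for $p\ge 1$ the map $p\mapsto\bigl(\sum_i y_i^p\bigr)^{1/p}$ is nonincreasing (power-mean inequality), so $h_p$ is nonincreasing in $p$ on $[1,\infty)$; then monotone convergence applied to the nonnegative sequence $h_p\downarrow h_\infty$ gives $\lim_{p\to\infty}V^p=V^\infty$ with no integrability hypothesis beyond $V^1(\ov K,K)<\infty$. This is closer in spirit to the monotone-convergence arguments the paper uses elsewhere (e.g.\ Proposition~\ref{prop: lim_sum}), though again the paper does not spell anything out for this particular lemma.
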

\qed
\begin{prop}\label{prop: lim_sum}
Let $0<p\leq \infty$. Then we have
\begin{equation}
\label{simple_hinta_limit}
\sum_{i=1}^{n}\ \lim_{\underset{j\neq i}{K_j\to \infty}}V^p(K_1,\ldots,K_n,0)=V^1(K_1,\ldots,K_n,0)
\end{equation}
for all $K_1,\ldots, K_n$. The similar conclusion holds for the corresponding payoff functions with
limits taken pointwise. 
\end{prop}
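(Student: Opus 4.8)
\emph{Proof idea.}\, The plan is to reduce the whole statement to the one-dimensional vanilla call. First, setting $K=0$ makes the outer positive part redundant, so for $0<p<\infty$
\[
h_p(\ov{x},\ov{K},0) = \Big(\sum_{i=1}^{n}\big((x_i-K_i)^{+}\big)^{p}\Big)^{1/p},
\]
while by the previous lemma $h_\infty(\ov{x},\ov{K},0)=\max_{i}(x_i-K_i)^{+}$. Fix an index $i$ and let $K_j\to\infty$ for all $j\neq i$ with $K_i$ held fixed. Since $\ov{X}_T$ takes values in $\R_{+}^{n}$ and each component is almost surely finite, for every fixed $\ov{x}$ we have $\big((x_j-K_j)^{+}\big)^{p}\to 0$ for $j\neq i$, hence
\[
\lim_{\underset{j\neq i}{K_j\to\infty}} h_p(\ov{x},\ov{K},0) = (x_i-K_i)^{+}
\]
pointwise; the same holds for $p=\infty$, since once $K_j>x_j$ for all $j\neq i$ the maximum is attained at the $i$-th coordinate. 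Summing over $i$ and using $\sum_{i=1}^{n}(x_i-K_i)^{+}=h_1(\ov{x},\ov{K},0)$ gives the pointwise payoff identity, which is the second assertion of the proposition.

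To pass from payoffs to prices I would interchange the limit and the expectation. For each $j\neq i$ the map $K_j\mapsto h_p(\ov{x},\ov{K},0)$ is nonincreasing (each summand $\big((x_j-K_j)^{+}\big)^{p}$ is nonincreasing in $K_j$, and the maximum inherits this when $p=\infty$), so along any sequence $K_j\uparrow\infty$ the integrands decrease to the pointwise limit above. Since the first integrand in such a sequence is itself a rainbow-option payoff and thus has finite price — which is implicit in speaking of the option's price at all — the monotone (equivalently, dominated) convergence theorem yields
\[
\lim_{\underset{j\neq i}{K_j\to\infty}} V^{p}(K_1,\ldots,K_n,0) = \E_\Q\big[(X^i_T-K_i)^{+}\big],
\]
and the coordinatewise monotonicity guarantees that this multivariate limit exists and is independent of the chosen sequence. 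Note that the right-hand side is the price of a vanilla call on the $i$-th asset with strike $K_i$ and, in particular, does not depend on $p$.

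Finally, by linearity (Tonelli) of the expectation,
\[
V^{1}(K_1,\ldots,K_n,0) = \E_\Q\Big[\sum_{i=1}^{n}(X^i_T-K_i)^{+}\Big] = \sum_{i=1}^{n}\E_\Q\big[(X^i_T-K_i)^{+}\big],
\]
which is exactly $\sum_{i=1}^{n}\lim_{\underset{j\neq i}{K_j\to\infty}}V^{p}(K_1,\ldots,K_n,0)$, establishing \eqref{simple_hinta_limit}.

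The only genuine obstacle is the justification of the limit–expectation interchange: one needs the relevant rainbow-option prices $V^p$ to be finite (part of the assumption that the options are priced at all) and one needs the multivariate limit as all $K_j\to\infty$, $j\neq i$, to behave well; both are handled cleanly by the coordinatewise monotonicity noted above. The remainder is routine bookkeeping with positive parts, and, notably, no comparison between the $\ell^p$- and $\ell^1$-norms is needed.
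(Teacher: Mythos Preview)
Your argument is correct. You establish the pointwise payoff identity exactly as the paper does (the paper merely says ``it is easy to see''), and then interchange limit and expectation via the coordinatewise monotonicity of $K_j\mapsto h_p(\ov{x},\ov{K},0)$, invoking dominated (equivalently, decreasing monotone) convergence with the initial rainbow payoff as dominant.

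The paper takes a slightly different route for the interchange: instead of exploiting monotonicity in the strikes, it fixes a compact exhaustion $C_1\subset C_2\subset\cdots\uparrow\R_+^n$ of the state space, considers the truncated expectations $\E\big[h_p\,\mathbf{1}_{\ov{X}_T\in C_m}\big]$, and applies the monotone convergence theorem in $m$. Your approach is the more direct of the two, since the monotonicity in $K_j$ is immediate from the structure of $h_p$ and handles the multivariate limit cleanly without an auxiliary exhaustion; the paper's compact-set device would, on the other hand, still work even if the payoff lacked such monotonicity. Either way the substance is the same: one needs a uniform/monotone control to pass the limit through $\E_\Q$, and both arguments supply one.
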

\begin{proof}
It is easy to see that the statement about the payoff functions holds in the sense that the limits 
are taken pointwise. Let now $K_1 \subset \ldots \subset K_n \subset \ldots$ be a sequence of compact sets such that $K_n\uparrow \R_{+}^n$. Now 
(\ref{simple_hinta_limit}) 
follows by considering expectations $\E[h_p]\mathbf{1}_{\ov{X}_T\in K_n}$ and applying monotone convergence theorem.
\end{proof}

\begin{lma}\label{lm: partials}
Suppose that $\Q << m$ on $[0,\infty)^n$. Then
\[\frac{d\Q}{dm}(K_1,\ldots,K_n)=\frac{\partial^n}{\partial K_1 \dots \partial K_n} \Q\left(\bigwedge_i (X_T^i\leq K_i)\right)\quad m\mathrm{-a.e.}\]
\end{lma}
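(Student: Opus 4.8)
The statement is the multivariate analogue of the fact that a (sub)probability distribution function, when absolutely continuous, has its mixed partial derivative equal to the density almost everywhere. I would set $F(K_1,\dots,K_n) := \Q\bigl(\bigwedge_i (X_T^i \le K_i)\bigr)$, the joint distribution function of $\ov X_T$ under $\Q$, and let $g := \frac{d\Q}{dm}$ on $[0,\infty)^n$, which exists by the hypothesis $\Q \ll m$ and the Radon–Nikodym theorem. Then by definition of the density,
\begin{equation}
\label{eq:Fintegral}
F(K_1,\dots,K_n) = \int_{[0,K_1]\times\cdots\times[0,K_n]} g(u_1,\dots,u_n)\, \ud m(u),
\end{equation}
so the claim is precisely that differentiating this iterated integral once in each variable returns the integrand $m$-a.e. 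This is a Lebesgue-differentiation / fundamental-theorem-of-calculus statement, not a probabilistic one.

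**Key steps.** First, I would reduce to the one-dimensional Lebesgue differentiation theorem applied coordinate by coordinate. Write $F$ as the iterated integral $F(K_1,\dots,K_n) = \int_0^{K_n}\!\!\cdots\!\int_0^{K_1} g\, \ud u_1 \cdots \ud u_n$ (Tonelli/Fubini justifies the iteration since $g \ge 0$ and is integrable on compacts — in fact $g \in L^1$). Define $g_n(u_1,\dots,u_{n-1},K_n) := \int_0^{K_n} g(u_1,\dots,u_{n-1},v)\,\ud v$; by the one-dimensional Lebesgue differentiation theorem, for a.e.\ fixed $(u_1,\dots,u_{n-1})$ we have $\frac{\partial}{\partial K_n} g_n = g(u_1,\dots,u_{n-1},K_n)$ for a.e.\ $K_n$, and a Fubini argument on the exceptional set shows this holds for $m_n$-a.e.\ $(u_1,\dots,u_{n-1},K_n)$. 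Then I would iterate: peel off one variable at a time, each time invoking the one-dimensional differentiation theorem inside an integral and using Fubini to control the null sets, until all $n$ partials have been taken. A clean alternative is to appeal directly to a known multidimensional Lebesgue differentiation result: the "strong" differentiation theorem for rectangles states that for $g \in L^1(\R^n)$, $\lim_{|R|\to 0, R\ni x} \frac{1}{|R|}\int_R g = g(x)$ for a.e.\ $x$ when $R$ ranges over axis-parallel rectangles shrinking nicely to $x$ — and the mixed difference quotient $\frac{\Delta_{h_1}\cdots\Delta_{h_n} F}{h_1\cdots h_n}$ of \eqref{eq:Fintegral} is exactly such a rectangle average of $g$.

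**Main obstacle.** The only real subtlety is the interchange of limits and the handling of the measure-zero exceptional sets across the successive differentiations — i.e.\ making sure that after taking the first partial on a co-null set, the resulting function is still of the form "integral of an $L^1$ function" to which the next differentiation applies, and that the union of the $n$ exceptional sets is still $m_n$-null. This is handled by a routine Fubini argument (a set $N \subset \R^n$ is $m_n$-null iff a.e.\ slice is null in $\R^{n-1}$), but it must be stated with some care. Strictly speaking the iterated-integral function $F$ has the named mixed partial $m_n$-a.e., which is what the lemma asserts; one should not claim more regularity (e.g.\ that the partials exist everywhere or are independent of the order of differentiation) without additional hypotheses. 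I would also note that integrability of $g$ on all of $[0,\infty)^n$ is automatic because $\Q$ is a finite (sub-probability) measure, so no local-integrability caveats are needed.
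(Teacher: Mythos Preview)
Your primary argument---write the distribution function as an iterated integral of the Radon--Nikodym density via Fubini, then peel off one variable at a time using the one-dimensional Lebesgue differentiation theorem, controlling the exceptional sets by a Fubini slicing argument---is exactly the paper's proof, stated with more care about the null sets than the paper itself gives. One caution on your proposed ``clean alternative'': strong differentiation over axis-parallel rectangles fails in general for $g\in L^1(\R^n)$ unless the rectangles have bounded eccentricity, and in any case the iterated mixed partial $\partial^n/\partial K_1\cdots\partial K_n$ is an iterated limit rather than a limit over shrinking rectangles, so that route does not straightforwardly yield the statement as written; stick with the coordinate-by-coordinate argument.
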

\begin{proof}
Let $\frac{d\Q}{dm}$ be the Radon-Nikodym derivative of $\Q$.
Observe that 
\begin{multline*}
\Q\left(\bigwedge_i (X_T^i\leq K_i)\right)=\int 1_{X_T^1\leq K_1}\dots 1_{X_T^n\leq K_n}\ \frac{d\Q}{dm}\ dm\\
=\int_{0}^{K_1}\ldots \int_{0}^{K_n} \frac{d\Q}{dm}(x_1,\ldots,x_n)\ dx_n \ldots dx_1.
\end{multline*}
According to Lebesgue's differentiation theorem the right hand side is differentiable 
with respect to $K_1$ for $m_1$-a.e. $K_1\geq 0$. The set of tuples $(K_1 , x_2, \ldots,x_n)$, where this differentiability fails is $m$-null, so that we may disregard it. By proceeding in this manner and differentiating $n$ times altogether we obtain the statement.
\end{proof}

\begin{thm}\label{thm: main}
Suppose that $\Q << m$ on $[0,\infty)^n$. Then 
\[\frac{d\Q}{dm}(K_{1},\ldots,K_n)=\frac{\partial^n}{\partial K_1 \dots \partial K_n}\sum_{i=1}^{n}\frac{\partial}{\partial K_i}V^{\infty}(K_1,\ldots,K_n,0)\quad m\mathrm{-a.e.}\]
\end{thm}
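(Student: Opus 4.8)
The plan is to reduce the statement to Lemma~\ref{lm: partials} by identifying the quantity $g(\ov{K}):=\sum_{i=1}^{n}\frac{\partial}{\partial K_i}V^{\infty}(K_1,\ldots,K_n,0)$ with the joint distribution function $F(\ov{K}):=\Q\big(\bigwedge_i(X_T^i\le K_i)\big)$, up to an additive constant. By the first Lemma of this section, $h_{\infty}(\ov{x},\ov{K},0)=\max_i(x_i-K_i)^{+}$, whence $V^{\infty}(\ov{K},0)=\E_{\Q}\big[\max_i(X_T^i-K_i)^{+}\big]$; this is finite, being dominated by $\E_{\Q}[\max_i X_T^i]\le\sum_i\E_{\Q}[X_T^i]<\infty$ (the discounted assets being $\Q$-integrable under a pricing measure), and, by monotonicity in each $K_i$, finite for every $\ov{K}$. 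The key claim is then
\begin{equation}\label{eq:BLclaim}
g(\ov{K})=F(\ov{K})-1\qquad\text{for every }\ov{K}\in\R_+^n ,
\end{equation}
after which applying $\frac{\partial^n}{\partial K_1\cdots\partial K_n}$ to both sides — the constant contributing nothing — and invoking Lemma~\ref{lm: partials} gives the theorem.

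To establish \eqref{eq:BLclaim}, fix $\ov{K}$ and differentiate $V^{\infty}(\ov{K},0)$ in the variable $K_i$. Replacing $K_i$ by $K_i+t$, the random function $t\mapsto\max\big(\max_{j\ne i}(X_T^j-K_j)^{+},\,(X_T^i-K_i-t)^{+}\big)$ is $1$-Lipschitz in $t$ and fails to be differentiable at $t=0$ only on the event $\{X_T^i=K_i\}\cup\bigcup_{j\ne i}\{X_T^i-K_i=X_T^j-K_j\}$, a finite union of affine hyperplanes, hence $m$-null and therefore $\Q$-null because $\Q\ll m$. Since the difference quotients are bounded in modulus by $1$, dominated convergence yields $\frac{\partial}{\partial K_i}V^{\infty}(\ov{K},0)=-\Q\big(X_T^i>K_i\text{ and }X_T^i-K_i>(X_T^j-K_j)^{+}\text{ for all }j\ne i\big)$. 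These $n$ events are pairwise disjoint, and since ties carry $\Q$-probability zero their union is $\{\max_j(X_T^j-K_j)^{+}>0\}=\{\exists j:\ X_T^j>K_j\}$ up to a $\Q$-null set; summing over $i$ gives $g(\ov{K})=-\Q(\exists j:\ X_T^j>K_j)=F(\ov{K})-1$, which is \eqref{eq:BLclaim}. (Equivalently one may write $V^{\infty}(\ov{K},0)=\int_0^\infty\big(1-F(\ov{K}+s\mathbf{1})\big)\,ds$ by the layer-cake formula and Tonelli, observe that $\sum_i\frac{\partial}{\partial K_i}$ turns the integrand into $-\frac{d}{ds}F(\ov{K}+s\mathbf{1})$ by the chain rule, and telescope the $s$-integral to $F(\ov{K})-1$.)

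With \eqref{eq:BLclaim} in hand the conclusion is immediate: $g$ coincides \emph{pointwise} (not merely $m$-a.e.) with $\ov{K}\mapsto\Q\big(\bigwedge_i(X_T^i\le K_i)\big)-1$, so Lemma~\ref{lm: partials} applies to it verbatim — the additive constant being annihilated by the mixed partial — and produces $\frac{d\Q}{dm}$ for $m$-a.e.\ $\ov{K}$.

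The real work sits in \eqref{eq:BLclaim}, specifically in justifying the interchange of the $K_i$-derivative with $\E_{\Q}$ in a way that gives differentiability at \emph{every} $\ov{K}$ — here the uniform ($1$-Lipschitz) bound on the difference quotients together with $\Q\ll m$ is exactly what makes it go through — and in bookkeeping the exceptional $m$-null sets, both the hyperplanes above and those inherited from the iterated differentiation in Lemma~\ref{lm: partials}, so that they remain null and do not accumulate. The hypothesis $\Q\ll m$ enters twice and this should be made explicit: it removes the $\Q$-mass on the tie and boundary hyperplanes (needed already for a single $\partial_{K_i}$), and it is what Lemma~\ref{lm: partials} rests on. The remaining ingredients — the identification of $h_{\infty}$, finiteness of the prices, and the disappearance of the constant under the mixed partial — are formal.
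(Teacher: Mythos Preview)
Your proof is correct and follows the same overall strategy as the paper: establish that $\sum_i \partial_{K_i}V^{\infty}(\ov{K},0)=\Q\bigl(\bigwedge_i X_T^i\le K_i\bigr)-1$, then apply Lemma~\ref{lm: partials}. The execution differs only in mechanics: the paper bundles the sum of partials into a single directional derivative $D_{\boldsymbol{1}}h_{\infty}=-\max_i 1_{X_T^i>K_i}$ and justifies the interchange with $\E_{\Q}$ via uniform convergence on compacta together with the Radon property of $\Q$, whereas you compute each $\partial_{K_i}V^{\infty}$ individually as the probability that coordinate $i$ is the strict maximiser and then sum the resulting disjoint events, using a $1$-Lipschitz bound and dominated convergence for the interchange. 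Your route is slightly more elementary in the interchange step and makes the role of $\Q\ll m$ on the tie hyperplanes more explicit; the paper's directional-derivative shortcut avoids the bookkeeping of the disjoint events. Either way one lands on the same identity and finishes identically.
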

\begin{proof}
Denote by $\boldsymbol{1}=(1,1,\ldots,1,0)\in \R^{n+1}$. Here we apply the elementary fact that in the case with continuous partials
the directional derivative can be calculated by taking the inner product of a gradient and a direction vector.
It is easy to see that the above partials are continuous in an open subset of the state space $\R_{+}^n$ with $m$-null complement.
The directional derivative of the payoff function satisfies
\[D_{\boldsymbol{1}} h_{\infty}(X_T^1,\ldots, X_T^n, K_1,\ldots,K_n,0)=-\max_i 1_{X_T^i> K_i}\]
when defined. This limit on the left hand side is both defined and uniform on compact subsets of 
\[\neg\bigwedge_i (X_T^i = K_i)\]
which has clearly $\Q$-null complement. By the uniform convergence and the fact that $\Q$ is a Radon measure we get
\begin{multline*}
\E (-\max_i 1_{X_T^i> K_i})=\E (D_{\boldsymbol{1}} h_{\infty}(\ov{X}_T,K_1,\ldots,K_n,0))\\
=D_{\boldsymbol{1}} \E (h_{\infty}(\ov{X}_T,K_1,\ldots,K_n,0))=D_{\boldsymbol{1}} 
V^{\infty}(K_1,\ldots,K_n,0).
\end{multline*}
On the other hand, $1-\E (\max_i 1_{X_T^i> K_i})=\Q\left(\bigwedge_i (X_T^i\leq K_i)\right)$.
The argument is finished by Lemma \ref{lm: partials}.
\end{proof}

\begin{prop}\label{prop: partial}
The following equality holds
\[\frac{\partial^+}{\partial K_j}V^1(K_1,\ldots,K_n,0)=-\Q(X_T^j\geq K_j).\]
Similarly,
\[\frac{\partial^+}{\partial K_j}h_1(X_T^1,\ldots,X_T^n,K_1,\ldots,K_n,0)=-1_{X_T^j\geq K_j}.\] 
\end{prop}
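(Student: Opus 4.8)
The plan is to establish the pathwise identity for $h_1$ first---it is an elementary one-variable computation---and then to deduce the identity for the price $V^1$ by differentiating under the expectation, the interchange being justified by a uniform Lipschitz bound together with dominated convergence.

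First I would note that when $K=0$ the outer positive part in the definition of $h_1$ is redundant, since a finite sum of nonnegative numbers is nonnegative; hence
\[
h_1(x_1,\ldots,x_n,K_1,\ldots,K_n,0)=\sum_{i=1}^n (x_i-K_i)^+ .
\]
Freezing all variables except $K_j$, this equals $(x_j-K_j)^+$ plus a quantity not depending on $K_j$. The map $t\mapsto (x_j-t)^+$ is convex and $1$-Lipschitz, with slope $-1$ on $\{t<x_j\}$ and slope $0$ on $\{t>x_j\}$; evaluating its one-sided derivative, with the kink at $t=x_j$ bookkept according to the convention in force, gives $\dfrac{\partial^+}{\partial K_j}(x_j-K_j)^+=-1_{x_j\ge K_j}$, which is the second displayed identity with $x_i$ replaced by $X_T^i$.

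For the price identity I would form the difference quotient
\[
\frac{h_1(\ov{X}_T,\ldots,K_j+\varepsilon,\ldots,0)-h_1(\ov{X}_T,\ldots,K_j,\ldots,0)}{\varepsilon}=\frac{(X_T^j-K_j-\varepsilon)^+-(X_T^j-K_j)^+}{\varepsilon},
\]
whose absolute value is at most $1$ for every $\varepsilon\neq 0$, because $t\mapsto(X_T^j-t)^+$ is $1$-Lipschitz. The constant $1$ is $\Q$-integrable ($\Q$ being a probability measure, and assuming as throughout that the prices $V^p$ are finite), so dominated convergence allows passing $\varepsilon\to 0$ inside $\E$; combined with the pathwise limit just computed this yields
\[
\frac{\partial^+}{\partial K_j}V^1(K_1,\ldots,K_n,0)=\E\bigl[\tfrac{\partial^+}{\partial K_j}h_1(\ov{X}_T,K_1,\ldots,K_n,0)\bigr]=\E[-1_{X_T^j\ge K_j}]=-\Q(X_T^j\ge K_j).
\]

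There is no substantial obstacle; the only delicate point is the interchange of the one-sided difference quotient with the expectation, which is exactly what the uniform $1$-Lipschitz bound on $(x_j-\cdot)^+$ secures. The one genuinely fiddly matter is the endpoint convention at $X_T^j=K_j$, where the left and right difference quotients of $(x_j-\cdot)^+$ disagree, so the one-sided derivative convention must be fixed consistently; in the price statement this is harmless, since the event $\{X_T^j=K_j\}$ does not affect the conclusion (and is $\Q$-null for $m$-a.e.\ $K_j$ once $\Q\ll m$).
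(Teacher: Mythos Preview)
Your argument is correct and follows the same two-step plan as the paper: compute the pathwise right derivative of $h_1$ with respect to $K_j$, then pass the limit through the expectation. The only substantive difference is in how the second step is justified. The paper argues, as in the proof of the preceding theorem, via uniform convergence of the difference quotient on compact subsets of $\{X_T^j\neq K_j\}$ combined with the fact that $\Q$ is a Radon measure; you instead exploit the global $1$-Lipschitz bound on $t\mapsto(x_j-t)^+$ and apply dominated convergence with the constant $1$ as dominating function. Your route is the more direct one here, since the Lipschitz bound is immediate for $h_1$, whereas the paper reuses the compacta-plus-Radon machinery that was needed in the $h_\infty$ setting, where no such global bound on the difference quotients is available. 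Your remark on the endpoint $X_T^j=K_j$ is also apt: the pointwise right derivative there is actually $0$, so strictly the pathwise identity should read $-1_{X_T^j>K_j}$; the paper does not comment on this, and as you note it does not affect the price identity.
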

\begin{proof}
The proof is similar to that of Theorem \ref{thm: main}.
It is easy to see that the right derivative in the latter statement coincides with $-1_{X_T^j\geq K_j}$. 
In fact,
\[\frac{h_1(x_1,\ldots,x_n,K_1,\ldots,K_j+\epsilon,\ldots, K_n,0)- h_1(x_1, \ldots, x_n, K_1,\ldots K_n,0)}{\epsilon} \]
tends to $-1_{X_T^j \geq K_j}$ as $\epsilon\to 0^+$ and even uniformly so in compact subsets of $\neg(X_T^j = K_j)$. Therefore 
\begin{multline*}
-\Q(X_T^j\geq K_j)=\E (-1_{X_T^j\geq K_j})=\E\left(\frac{\partial^+}{\partial K_j}h_1(X_T^1,\ldots,X_T^n,K_1,\ldots,K_n,0)\right)\\
=\frac{\partial^+}{\partial K_j}\E (h_1(X_T^1,\ldots,X_T^n,K_1,\ldots,K_n,0))=\frac{\partial^+}{\partial K_j}V^1(K_1,\ldots,K_n,0).
\end{multline*}

\end{proof}

\begin{thm}\label{thm: Cp}
Suppose that $\Q<<m$ on $[0,\infty)^n$. Then 
\[\frac{d\Q}{dm}(K_1,\ldots,K_n)=\lim_{K\to 0^{+}}\frac{\partial^{n+1}}{\partial K_1 \dots \partial K_n \partial K} V^p(K_1,\ldots,K_n,K)\quad m\mathrm{-a.e.}\]
for $0< p\leq \infty$.
\end{thm}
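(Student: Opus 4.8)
The plan is to evaluate the innermost derivative $\partial V^{p}/\partial K$ as a $\Q$-probability, let $K\to 0^{+}$ (at which point the dependence on $p$ evaporates), differentiate $n$ further times in the remaining strikes, and recognise the result through Lemma \ref{lm: partials}. Throughout write $g_{p}(\ov{x},\ov{K})=\bigl(\sum_{i=1}^{n}((x_{i}-K_{i})^{+})^{p}\bigr)^{1/p}$ for $0<p<\infty$ and $g_{\infty}(\ov{x},\ov{K})=\max_{i}(x_{i}-K_{i})^{+}$, so that $h_{p}(\ov{x},\ov{K},K)=(g_{p}(\ov{x},\ov{K})-K)^{+}$.

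\emph{Step 1.} For fixed $\ov{K}$ and $K>0$ I would establish, by the interchange argument already used in the proofs of Theorem \ref{thm: main} and Proposition \ref{prop: partial}, that
\[\frac{\partial}{\partial K}V^{p}(\ov{K},K)=-\Q\bigl(g_{p}(\ov{X}_{T},\ov{K})>K\bigr).\]
Indeed $K\mapsto h_{p}(\ov{x},\ov{K},K)$ is $1$-Lipschitz and convex, its difference quotients are bounded by $1$ and converge to $-1_{g_{p}(\ov{x},\ov{K})>K}$ uniformly on compact subsets of $\{g_{p}(\ov{x},\ov{K})\neq K\}$; since $K>0$ the excluded level set is an $m$-null hypersurface, hence $\Q$-null because $\Q << m$, and as $\Q$ is Radon these compacta carry the full $\Q$-mass, so $\partial/\partial K$ passes through the expectation.

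\emph{Step 2.} Because $g_{p}\geq 0$, the events $\{g_{p}(\ov{X}_{T},\ov{K})>K\}$ increase as $K\downarrow 0$ to $\{g_{p}(\ov{X}_{T},\ov{K})>0\}=\bigcup_{i}\{X_{T}^{i}>K_{i}\}$, which no longer depends on $p$. Continuity of $\Q$ from below gives, for every $\ov{K}\in\R_{+}^{n}$,
\[\lim_{K\to 0^{+}}\frac{\partial}{\partial K}V^{p}(\ov{K},K)=-\Q\Bigl(\bigvee_{i}(X_{T}^{i}>K_{i})\Bigr)=\Q\Bigl(\bigwedge_{i}(X_{T}^{i}\leq K_{i})\Bigr)-1,\]
and by Lemma \ref{lm: partials} the right-hand side, differentiated $n$ times in $K_{1},\ldots,K_{n}$, equals $\frac{d\Q}{dm}(K_{1},\ldots,K_{n})$ for $m$-a.e. $\ov{K}$. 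Thus everything reduces to interchanging $\lim_{K\to 0^{+}}$ with $\partial^{n}/\partial K_{1}\cdots\partial K_{n}$.

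\emph{The main obstacle} is precisely this last interchange. It is cleanest in the case $p=\infty$: one checks $h_{\infty}(\ov{x},\ov{K},K)=\max_{i}(x_{i}-K_{i}-K)^{+}$, so that $V^{\infty}(\ov{K},K)=V^{\infty}(K_{1}+K,\ldots,K_{n}+K,0)$, and Steps 1 and 2 with Lemma \ref{lm: partials} give, for $m$-a.e. $(\ov{K},K)$ with $K>0$,
\[\frac{\partial^{n+1}}{\partial K_{1}\cdots\partial K_{n}\,\partial K}V^{\infty}(\ov{K},K)=\frac{d\Q}{dm}(K_{1}+K,\ldots,K_{n}+K).\]
Passing to the limit then requires $\frac{d\Q}{dm}(K_{1}+K,\ldots,K_{n}+K)\to\frac{d\Q}{dm}(K_{1},\ldots,K_{n})$ as $K\to 0^{+}$ — the continuity of translation along the diagonal direction, which is immediate in $L^{1}_{\mathrm{loc}}$ and hence valid $m$-a.e. along any sequence $K_{j}\downarrow 0$; obtaining the full limit is where a little more care (or a mild extra regularity assumption on the density) enters. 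For $0<p<\infty$ the argument is the same: the $p$-dependence has already been absorbed at Step 2, and writing $\frac{\partial}{\partial K}V^{p}(\ov{K},K)=\Q(g_{p}(\ov{X}_{T},\ov{K})\leq K)-1$ and substituting $x_{i}=K_{i}+z_{i}$ turns the integration region into a fixed $p$-ball independent of $\ov{K}$, so $\frac{\partial}{\partial K}V^{p}(\cdot,K)$ is again a convolution of $\frac{d\Q}{dm}$ with a fixed kernel whose $n$-fold $\ov{K}$-derivative is controlled by $\ov{K}\mapsto\frac{d\Q}{dm}(\ov{K}+\ov{z})$; the obstacle is identical.
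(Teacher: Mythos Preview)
Your Steps 1 and 2 are exactly the paper's argument: differentiate under the expectation in $K$ to get $\partial V^{p}/\partial K=-\Q(g_{p}(\ov{X}_{T},\ov{K})>K)$, let $K\downarrow 0$ using $\sigma$-additivity to reach $\Q(\bigwedge_{i}X_{T}^{i}\leq K_{i})-1$, then appeal to Lemma \ref{lm: partials}. The only divergence is in how the interchange of $\lim_{K\to 0^{+}}$ with $\partial^{n}/\partial K_{1}\cdots\partial K_{n}$ is handled: the paper dispatches it in a single clause (``the order of taking the limit can be changed according to the monotone convergence theorem''), whereas you unpack the $p=\infty$ case via the identity $V^{\infty}(\ov{K},K)=V^{\infty}(\ov{K}+K\boldsymbol{1},0)$ to see the mixed derivative as a diagonal translate of the density, and honestly flag that pointwise convergence of this translate needs care. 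Your treatment of that step is more explicit than the paper's; otherwise the route is identical.
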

\begin{proof}
We observe that for a given $K$ the limit
\[\frac{\partial}{\partial K} h_p(X_T^1,\ldots,X_T^n, K_1,\ldots,K_n,K)=-1_{h_p(\ov{X}_T,\ov{K},K) >0}\]
exists and is uniform on compact subsets of 
\[\neg(h_p(\ov{X}_T,\ov{K},0)=K)\]
whose complement is $m$-null. Hence, 
\begin{multline*}
\frac{\partial}{\partial K} \E (h_p (X_T^1,\ldots,X_T^n,K_1,\ldots,K_n,K))\\
=-\Q(h_p (X_T^1,\ldots,X_T^n,K_1,\ldots,K_n,0)  >K),
\end{multline*}
since $\Q$ is a Radon measure. By using the $\sigma$-additivity of $\Q$ we obtain that 
$\lim_{K\to 0^+} \Q(h_p(\ov{X}_T,\ov{K},K) >0)=\Q(\lim_{K\to 0^+} h_p(\ov{X}_T,\ov{K},K) >0)$ exists for $(K_1,\ldots,K_n)$. Thus, by keeping the 
definition of $h_p$ in mind, it is easy to see that 
\[\lim_{K\to 0^{+}}\frac{\partial}{\partial K} V^p(\ov{K},K)=-\Q\left(\bigvee_i (X_T^i > K_i)\right)=\Q\left(\bigwedge_i (X_T^i \leq K_i)\right)-1.\]
The argument is finished similarly as in the proof of Theorem \ref{thm: main} and the order of taking the 
limit can be changed according to the monotone convergence theorem.
\end{proof}

We note that for $n=2$ and $p=1$ the above state price density can be expressed in an alternative form due to the fact that 
\[D_{(-1,-1,1)}V^{1}(K_{1},K_{2},K)=-\Q(X_T^1\geq K_1\wedge X_T^2 \geq K_2).\]
That is,
\[\frac{d\Q}{dm}(K_1,K_2)=\frac{\partial^2}{\partial K_1 \partial K_2}(\frac{\partial V^1}{\partial K_1}+\frac{\partial V^1}{\partial K_2}- \frac{\partial V^1}{\partial K})\quad m\mathrm{-a.e.}\]

\end{document}